\theoremstyle{theorem}
\newtheorem{theorem}{Theorem}
\newtheorem{lemma}{Lemma}
\newtheorem{example}{Example}
\theoremstyle{definition}
\newtheorem{definition}{Definition}
\theoremstyle{remark}
\newtheorem*{remark}{Remark}
\DeclareMathOperator{\syl}{Syl}
\DeclareMathOperator{\res}{Res}
\DeclareMathOperator{\sign}{sign}
\title{A new method for reducing algebraic programs to polynomial programs}
\author{Muhammad Maaz}
\address{University of Toronto}
\author{Adam W. Strzebo{\'n}ski}
\address{Wolfram Research}
\date{}
\begin{document}

\maketitle

\begin{abstract}
    We consider a generalization of polynomial programs: algebraic programs, which are optimization or feasibility problems with algebraic objectives or constraints. Algebraic functions are defined as zeros of multivariate polynomials. They are a rich set of functions that includes polynomials themselves, but also ratios and radicals, and finite compositions thereof. When an algebraic program is given in terms of radical expressions, a straightforward way of reformulating into a polynomial program is to introduce a new variable for each distinct radical that appears. Hence, the rich theory and algorithms for polynomial programs, including satisfiability via cylindrical algebraic decomposition, infeasibility certificates via Positivstellensatz theorems, and optimization with sum-of-squares programming directly apply to algebraic programs. We propose a different reformulation, that in many cases introduces significantly fewer new variables, and thus produces polynomial programs that are easier to solve. First, we exhibit an algorithm that finds a defining polynomial of an algebraic function given as a radical expression. As a polynomial does not in general define a unique algebraic function, additional constraints need to be added that isolate the algebraic function from others defined by the same polynomial. Using results from real algebraic geometry, we develop an algorithm that generates polynomial inequalities that isolate an algebraic function. This allows us to reformulate an algebraic program into a polynomial one, by introducing only a single new variable for each algebraic function. On modified versions of classic optimization benchmarks with added algebraic terms, our formulation achieves speedups of up to 50x compared to the straightforward reformulation.
\end{abstract}

\section{Introduction}

We consider programs of the following form:
\begin{align*}
    &\min{f(x)} \qquad \text{s.t.} \\
    &g_1(x) \bowtie_1 0, g_2(x) \bowtie_2 0, g_3(x) \bowtie_3 0, \ldots g_m(x) \bowtie_m 0
\end{align*}
Where $x \in \mathbb{R}^n$, $\{\bowtie_i\}_1^m \in \{=, >, \geq, <, \leq\}^m$, and $f, \{g_i\}_1^m : \mathbb{R}^n \mapsto \mathbb{R}$ are \textit{algebraic functions}. We call such a program an \textit{algebraic program}.

Algebraic functions are a rich class of functions that are defined as the root, i.e., the zero, of a multivariate polynomial with rational coefficients \citep{knopp1996} (we denote the ring of polynomials with rational coefficients $\mathbb{Q}[x]$). For example, $y = \sqrt{x}$ is the root of $y^2-x$. This is contrasted with transcendental functions, like $\sin{x}$. In the case that it is a root of a univariate polynomial, we call it an algebraic \textit{number}, e.g., $\sqrt{2}$ is algebraic as it is a root of $x^2-2$. This is contrasted with transcendental numbers, like $\pi$. Clearly, every polynomial in $\mathbb{Q}[x]$ is algebraic, as the polynomial $f(x)$ satisfies the polynomial equality $y - f(x) = 0$.

Due to the Abel-Ruffini Theorem \citep{ruffini1813, abel1881_1824, abel1881_1826}, not all polynomial roots can be expressed in terms of radicals, that is using only addition, subtraction, multiplication, division, and rational exponents. For example, the algebraic function $f(x)$ implicitly defined by $f(x)^5 - f(x) - 1 = 0$ cannot be expressed with these operations. In this paper, we are only interested in algebraic functions \textit{that can be written out explicitly in terms of radicals}.

This leads us to a constructive perspective on algebraic functions. Any function consisting of a finite number of operations involving only addition, subtraction, multiplication, division, and rational exponents, i.e., a \textit{radical expression}, is algebraic. This is not obvious, and is typically proven by abstract algebra arguments. We refer the reader to the textbook of \citet{vanderwaerden1931} for a full exposition.

\subsection{Related Literature}

In the case that the functions in our program are all polynomials, we call this a polynomial program. This class of problems has been recently extensively studied, with a wealth of theoretical results and algorithms. (If they are all linear, then of course we have a linear program, which perhaps requires no introduction.)

A set defined by polynomial inequalities is called a \textit{basic semialgebraic set}. Its feasibility, or satisfiability in the computer science community, can be determined by means of cylindrical algebraic decomposition (CAD) \citep{collins1974quantifier}. The algorithm of CAD was developed to solve the problem of the existential theory of the reals, which asks for a satisfying assignment to a Boolean combination of polynomial (in)equalities (this defines a \textit{semialgebraic set}). While it was known to be solvable by \citet{tarski1951decision}, Collins' CAD algorithm was the first practical algorithm to solve it.  CAD is now the backbone of satisfiability modulo theories (SMT) solvers like Z3 \citep{z3paper}, which use it as the background theory solver for what is called the theory of non-linear arithmetic, i.e., classic Boolean satisfiability (SAT) generalized to allow for clauses containing polynomial inequalities \citep{z3cad}. See \citet{basu} for more details on algorithms for analyzing polynomial systems.

There is also a rich theory on basic semialgebraic sets and non-negative polynomials originating from the work of Hilbert. Hilbert's Nullstellensatz (``zero locus theorem") is a foundational result in algebraic geometry establishing conditions under which a set of polynomials have no common (complex) zeros \citep{hilbert1893}. There are now other analogous (real) ``positive locus theorems", namely the Positivstellensatz theorems of Krivine-Stengle \citep{krivine1964, stengle1974nullstellensatz}, Schmüdgen \citep{schmudgen1991}, and Putinar \citep{putinar1993}. These theorems provide conditions over which a set of polynomials is non-negative or positive over a semialgebraic set. As a consequence, they provide a certificate of infeasibility of a semialgebraic set, similar to Farkas' lemma for polyhedra.

These theorems are closely related to the concept of sum-of-squares, which goes back to Hilbert's seventeenth problem, which asked if every non-negative polynomial can be written as sum of squares of rational functions, and was answered in the affirmative by \citet{artin1927}. Sum-of-squares was eventually connected to semidefinite programming and birthed the modern field of sum-of-squares programming, a method for polynomial optimization, in the seminal works of \citet{parrilo2003semidefinite} and \citet{lasserre2001global}.

In contrast to polynomials, there has been relatively fewer work on feasibility or optimization with algebraic functions. A straightforward reduction to polynomial problems, which effectively introduces a new variable for each distinct radical, has been implemented in the \emph{Mathematica} system since late 1990s; see \citet{strzebonski1999optim}. \citet{jibetean2006global} study the minimization of a ratio of polynomials (which is of course an algebraic function). There is also the subfield of optimization referred to as geometric programming, which studies programs with functions that are ``posynomials": sums of functions of the form $c x_1^{a_1} x_2^{a_2} \dots x_n^{a_n}$, where the arguments $x_i > 0$, as is the coefficient $c>0$, and $a_i \in \mathbb{R}^n$ \citep{boyd2007tutorial}. These are also algebraic functions, e.g., $2x_1^2x_2^{-1/3}$, and are contrasted from polynomials due to the positive restriction on the arguments and the coefficients, while allowing for any real exponent. These programs can be made convex through a change of variables; see \citet{zener1967geometric}. Some subsequent work has looked at specific problems relating to subclasses of algebraic functions, like \citet{laraki2012semidefinite} and \citet{vigneron2014geometric}. Currently, algebraic functions are likely handled by: ``manual" reformulations, if simple enough, into polynomials, before being input into a polynomial optimization solver; piecewise linear approximations, which can be automatically computed by many solvers like Gurobi; or, generic nonlinear solvers like BARON.

\subsection{Outline of the Idea}

Our approach will be to reformulate algebraic programs into polynomial programs. We begin with a simple motivating example.

\begin{example}
	As a simple example of an algebraic program, we consider the simple problem of $\min \sqrt{x}$, with the only constraint that $x \geq 0$ to keep the problem real-valued. Obviously, the objective is monotonic, and the minimum is attained at $x=0$, but this example will serve as a useful launching point. Suppose we want to reformulate this algebraic program into a polynomial program. We introduce another variable $z$, and fix $z = \sqrt{x}$. Then, we can rewrite our objective as $\min z$. We still have the algebraic constraint that $z = \sqrt{x}$, but we can simply square both sides and obtain the polynomial constraint $z^2 - x = 0$. However, by squaring, we actually have introduced extraneous solutions to the feasible set, as $z = -\sqrt{x}$ also satisfies this constraint. We only want the positive square root. The easy fix is to introduce the constraint $z \geq 0$. Hence, the algebraic program $\min \sqrt{x}$ subject to $x \geq 0$ is equivalent to the polynomial program $\min z$ such that $z^2 - x = 0, x \geq 0, z \geq 0$.
\end{example}

There are effectively two key steps outlined in the example above. First, by finding $z^2-x=0$, we found a \textit{defining polynomial} of $\sqrt{x}$, that is a polynomial of which $\sqrt{x}$ is a root. Secondly, we saw that a defining polynomial may have other roots, and so does not uniquely define the algebraic function. We needed to restrict the possible solutions to the algebraic function we originally had. In complex analysis, this is referred to as isolating the ``branches" of an algebraic function: they would consider $z=\sqrt{x}$ and $z=-\sqrt{x}$ to be the same algebraic function implicitly defined by $z^2-x=0$, but with two ``branches", the positive part and the negative part.

Hence, to reformulate an algebraic program into a polynomial program, we need to be able to, for each algebraic function:

\begin{enumerate}
	\item Find a defining polynomial of the algebraic function
	\item Isolate the algebraic function by introducing additional polynomial constraints
\end{enumerate}

While we saw it was trivial with the $\sqrt{x}$ example, these two steps are not so easy in general. We found a defining polynomial of $\sqrt{x}$ by simple algebra, but algebraic functions can get quite complicated. For example, even with $z = x^{1/2} + x^{1/3}$, it is not obvious how to get rid of the radicals here, as raising both sides to the sixth power introduces additional radicals from the binomial expansion. As well, isolating an algebraic function from the other roots of a defining polynomial via polynomial inequalities is not obvious either. For example, $z = \sqrt{x} - \sqrt{y}$ is not always non-negative so a simple $z \geq 0$ does not suffice.

\subsubsection{Analogies with Algebraic Numbers}

Both of these desired steps have well-established analogies to the case of \textit{algebraic numbers}. Computer algebra systems can represent and manipulate rational numbers exactly. Algebraic numbers are stored internally by their minimal polynomial and a root index, e.g., $\sqrt{2}$ is represented as the second root, according to the usual ordering over the reals, of $x^2-2$. An alternative way to isolate this algebraic number from the other roots of its minimal polynomial is to use an isolating interval defined by rational numbers, which can be computed with classical algorithms like Sturm sequences and Descartes' rule of signs \citep{akritas2005comparative}. For example, $\sqrt{2}$ can be isolated as the sole root of $x^2-2$ in $[1,2]$.

\subsubsection{Our Contributions}

In this paper, we propose a new method of reformulating an algebraic program into a polynomial program. First, we present an algorithm which constructs a defining polynomial for a given radical expression. Then we describe a method that, for several classes of problems, finds polynomial inequalities that isolate the algebraic function represented by the given radical expression from the other roots of its defining polynomial. The proposed method introduces \emph{one} new variable for each distinct radical expression that appears in the algebraic program. A straightforward way of reformulating an algebraic program into a polynomial program introduces one new variable for each distinct radical, hence our reformulation method may introduce significantly less variables. We show examples in which the new approach leads to substantial performance improvements.

\section{Defining Polynomial of a Radical Expression}

As stated in the introduction, we are only interested in algebraic functions that can be written explicitly as radical expressions, that is using only addition, subtraction, multiplication, division, and rational exponents. A \emph{defining polynomial} of a radical expression $f(x_1, \ldots, x_n)$ is a polynomial $p(z, x_1, \ldots, x_n)$ such that $p(f(x_1, \ldots, x_n), x_1, \ldots, x_n)$ is identically zero. The defining polynomial is not unique -- the defining polynomial of lowest degree is called the \textit{minimal polynomial}.

\begin{example}
	\label{ex:noirreducibledefpoly}
	A radical expression may not have an \emph{irreducible} defining polynomial. Let $f(x) = \sqrt{x^{2}}$. Then $p(z, x) = z^2-x^2 = (z-x)(z+x)$ is a defining polynomial of $f(x)$, but neither of its two irreducible factors is identically zero at $f(x)$.
\end{example}

We can understand a radical expression as being made up of ``smaller" radical expressions. Given defining polynomials of two radical expressions, we wish to construct a defining polynomial of some arithmetic combination of these two functions (e.g., their sum). This key insight was used by \citet{zippel1997zero} to extend polynomial zero testing to algebraic function zero testing.

First, we need to define a crucial operator for polynomials, the resultant. Suppose we have two multivariate polynomials $p$ and $q$ with rational coefficients. The resultant is an operator that is applied with respect to one variable, so for its purposes we can treat these polynomials as univariate polynomials in one of the variables. Thus, we fix one of these variables, which we denote $x$, and treat $p$ and $q$ as polynomials in $x$, so that their coefficients are polynomials in the other variables. Let $p$ and $q$ have degrees $d$ and $e$, respectively, so that we can write $p = a_d x^d + a_{d-1} x^{d-1} + \dots + a_0$ and  $q = b_e x^e + b_{e-1} x^{e-1} + \dots + b_0$. The resultant is defined in terms of the Sylvester matrix.

\begin{definition}[Sylvster matrix \citep{basu}]
	The Sylvester matrix of $p$ and $q$, with respect to $x$, is:
	\begin{align*}
	\syl_x(p,q) =
	\begin{bmatrix}
		a_d & a_{d-1} & \cdots & a_0 & 0 & \cdots & 0 \\
		0 & a_d & \cdots & a_1 & a_0 & \cdots & 0 \\
		\vdots & \vdots & \ddots & \vdots & \vdots & \ddots & \vdots \\
		0 & 0 & \cdots & a_d & a_{d-1} & \cdots & a_0 \\
		b_e & b_{e-1} & \cdots & b_0 & 0 & \cdots & 0 \\
		0 & b_e & \cdots & b_1 & b_0 & \cdots & 0 \\
		\vdots & \vdots & \ddots & \vdots & \vdots & \ddots & \vdots \\
		0 & 0 & \cdots & b_e & b_{e-1} & \cdots & b_0
	\end{bmatrix}
	\end{align*}

	Precisely, its rows are: $x^{e-1} p, \, \ldots, \, p, \, x^{d-1} q, \, \ldots, \, q$, considered as vectors in the basis $x^{d + e - 1}, \, \ldots, \, x, \, 1$. Its first $e$ rows are copies of the coefficients of $p$, and its next $d$ rows are copies of the coefficients of $q$. In total, it has $d+e$ rows and $d+e$ columns.
\end{definition}

\begin{definition}[Resultant \citep{basu}]
	The resultant of $p$ and $q$, with respect to $x$, is $\res_x(p,q) = \det \syl_x(p,q)$.
\end{definition}

The resultant is a polynomial in the variables other than $x$, which vanishes if and only if the polynomials $p$ and $q$ have a common root. In this way, it can be used to eliminate a variable from two polynomials, and indeed was originally developed to solve systems of polynomial equalities. This is exactly how resultants will help us construct defining polynomials, as can be seen in the following example.

\begin{example}
	Suppose we have the algebraic function $f(x) = x^{1/2} + x^{1/3}$. It is the sum of two ``smaller" algebraic functions, $x^{1/2}$ and $x^{1/3}$. Let $t_1 = x^{1/2}$ and $t_2 = x^{1/3}$, so that $t_1^2 - x = 0$ and $t_2^3 - x = 0$. Observe that these are indeed defining polynomials of these two algebraic functions. Let $z = f(x) = t_1 + t_2$, so that $z - t_1 - t_2 = 0$. We now have a system of polynomial equalities: $z - t_1 - t_2 = 0, t_1^2 - x = 0, t_2^3 - x = 0$.

	We can then eliminate the two auxiliary variables $t_1$ and $t_2$ by two resultant calculations: namely, $\res_{t_2} (\res_{t_1} (z - t_1 - t_2, t_1^2 - x), t_2^3 - x)$. This gives $-x^3 + 3x^2z^2 - 6x^2z + x^2 - 3xz^4 - 2xz^3 + z^6$, which is indeed a defining polynomial of $x^{1/2} + x^{1/3}$, as can be seen by substituting $z = x^{1/2} + x^{1/3}$.
\end{example}

The example demonstrates that the resultant will be important for determining a defining polynomial of an arbitrary algebraic function by combining defining polynomials of its component functions. More generally, we have the following useful relationships between defining polynomials.

\begin{lemma}[{\citep[Section 9.4]{zippel1993book}}]
	\label{lem:zippel}
	Suppose $f$ and $g$ are algebraic functions, with respective defining polynomials $p(z)$ and $q(z)$, i.e., we fix their main variables to be $z$, and let the degree of $p$ be $d$. Let $r \in \mathbb{Z}, r>0$. Then, the second column gives a defining polynomial of the corresponding entry in the first column:
	\begin{align*}
		f + g &\qquad \res_t \left( p(z - t), q(t) \right) \\
		f - g &\qquad \res_t \left( p(z + t), q(t) \right) \\
		f \times g &\qquad \res_t \left( t^d \cdot p\left(\frac{z}{t}\right), q(t) \right) \\
		\frac{f}{g} &\qquad \res_t \left( p(t z), q(t) \right) \\
		\sqrt[r]{f} &\qquad \res_t \left( z^r - t, p(t) \right),
	\end{align*}

	where $t$ is an auxiliary variable.
\end{lemma}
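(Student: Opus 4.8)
The plan is to reduce all five rows to a single mechanism. Beyond the vanishing property of the resultant already recalled above, I would use the stronger fact that the resultant lies in the ideal generated by its two arguments \emph{with polynomial coefficients}: for any $A, B \in \mathbb{Q}[z, x_1, \ldots, x_n][t]$ there exist $u, v \in \mathbb{Q}[z, x_1, \ldots, x_n, t]$ with $\res_t(A, B) = uA + vB$. This is a one-line consequence of the Sylvester matrix: applied to the column vector of monomials $(t^{N-1}, \ldots, t, 1)^{\top}$ (where $N$ is its size), $\syl_t(A,B)$ returns the column whose entries are the shifts $t^{j}A$ and $t^{k}B$; multiplying through by the adjugate and reading off the last coordinate expresses $\det\syl_t(A,B) = \res_t(A,B)$ as a $\mathbb{Q}[z,x]$-linear combination of those shifts, which is exactly a representation $uA + vB$.

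I would establish the $f + g$ row first, as the template. Put $A = p(z - t)$ and $B = q(t)$, regarded as polynomials in $t$ over $\mathbb{Q}[z, x]$, and let $R(z) := \res_t(A, B)$; note that $R$ is free of $t$. Writing $R = uA + vB$ and then applying the substitution homomorphism $z \mapsto f + g$, $t \mapsto g$ — a ring map from $\mathbb{Q}[z,x,t]$ into whatever field extension of $\mathbb{Q}(x_1, \ldots, x_n)$ contains $f$ and $g$ — the left-hand side becomes $R(f+g)$, while the right-hand side becomes $u \cdot p\big((f+g) - g\big) + v \cdot q(g) = u \cdot p(f) + v \cdot q(g) = 0$, because $p$ and $q$ are defining polynomials of $f$ and $g$. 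Hence $R(f+g) \equiv 0$, i.e.\ $R$ is a defining polynomial of $f + g$.

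Every remaining row is the same argument with $A$ and the substitution adjusted so that both $A$ and $B$ are simultaneously annihilated. For $f - g$: take $A = p(z + t)$, $B = q(t)$, substitute $z \mapsto f - g$, $t \mapsto g$, giving $A \mapsto p(f) = 0$, $B \mapsto q(g) = 0$. For $f \times g$: with $d = \deg p$ and $p = a_d z^d + \cdots + a_0$, the expression $t^d\,p(z/t) = \sum_{i=0}^{d} a_i z^i t^{d-i}$ is genuinely a polynomial, and substituting $z \mapsto fg$, $t \mapsto g$ sends it to $\sum_i a_i (fg)^i g^{d-i} = g^d \sum_i a_i f^i = g^d\,p(f) = 0$, while $q(t) \mapsto q(g) = 0$. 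For $f/g$ (well-defined only when $g$ is not identically zero): take $A = p(tz)$, $B = q(t)$, substitute $z \mapsto f/g$, $t \mapsto g$, so $A \mapsto p\big(g \cdot (f/g)\big) = p(f) = 0$ and $B \mapsto q(g) = 0$. For $\sqrt[r]{f}$, only $f$ and $p$ are involved: take $A = z^{r} - t$, $B = p(t)$, and substitute $z \mapsto \sqrt[r]{f}$, $t \mapsto f$, so $A \mapsto (\sqrt[r]{f})^{r} - f = 0$ and $B \mapsto p(f) = 0$. In each case, specializing the identity $R = uA + vB$ at these values kills both terms on the right and therefore shows that the resultant vanishes at the claimed algebraic function.

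The vanishing is thus painless; the step I expect to require the most care — and the only content beyond bookkeeping — is verifying that the resultant produced is \emph{not} the zero polynomial, so that it is an honest defining polynomial rather than the useless one. For $f + g$ this follows from transcendence: if $\res_t(p(z-t), q(t)) \equiv 0$, then over $\overline{\mathbb{Q}(z,x)}$ the polynomials $p(z-t)$ and $q(t)$ in $t$ share a root $t_0$; but $q(t_0) = 0$ forces $t_0$ algebraic over $\mathbb{Q}(x)$, and $p(z - t_0) = 0$ forces $z - t_0$ algebraic over $\mathbb{Q}(x)$, so $z = (z - t_0) + t_0$ would be algebraic over $\mathbb{Q}(x)$, contradicting that $z$ is a free variable. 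Analogous arguments (using $z/t_0$ for the product, $t_0 z$ for the quotient, $t_0^{1/r}$ for the radical) handle the other rows. The subtlety is a handful of genuinely degenerate inputs — dividing by a function that is identically zero, or a defining polynomial divisible by its main variable, which silently carries the zero function as one of its branches — which should be excluded up front or repaired by first replacing $p$ (or $q$) by a proper factor; isolating exactly which inputs need this, and checking that minimal defining polynomials avoid it, is where the tedious part of a full write-up lives.
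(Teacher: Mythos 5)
Your proof is correct, and it supplies an argument the paper itself does not give: Lemma~\ref{lem:zippel} is simply cited from Zippel's book, so there is no in-paper proof to match. Your route — writing $\res_t(A,B) = uA + vB$ via the adjugate of the Sylvester matrix and then specializing $z$ and $t$ so that both $A$ and $B$ are annihilated — is clean and handles all five rows uniformly; the substitutions you choose ($t \mapsto g$ with $z \mapsto f\pm g$, $fg$, $f/g$, and $t \mapsto f$ with $z \mapsto \sqrt[r]{f}$) are exactly right, and the computation $t^d p(z/t) \mapsto g^d p(f)$ for the product row is the one place that needs checking and you check it. The classical proof (Zippel, or Loos's work on computing in algebraic extensions) instead uses the product formula $\res_t(A,B) = a^{e}\prod_{i,j}(\beta_j - \alpha_i)$-type identities to show the roots of the resultant are precisely the sums/products/quotients of roots of $p$ and $q$; that route gives more information (it identifies \emph{all} branches of the resulting polynomial, which is why the degree bounds in the paper's later theorem are tight for minimal polynomials), whereas your ideal-membership argument proves only the vanishing — but vanishing is all the lemma asserts under the paper's definition of defining polynomial. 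Your extra paragraph on non-vanishing of the resultant is a genuine strengthening that the statement does not formally require (the paper's definition does not exclude the zero polynomial, though of course a zero "defining polynomial" would be useless downstream); your transcendence argument for it is sound, and you are right that the residual work lives entirely in the degenerate inputs (division by the zero function, defining polynomials divisible by $z$, vanishing leading or constant coefficients in $t$) that you explicitly flag rather than sweep under the rug.
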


\begin{remark}
	Note that above, arguments in brackets mean that the function is evaluated at that point, i.e., $q(t)$ means to substitute $t$ in for $z$, as we fixed $z$ to be the main variable.
\end{remark}

This suggests a recursive algorithm for finding a defining polynomial: pattern match the function to one of the cases above, get defining polynomials of the two component functions (recursive step), and return the appropriate resultant. The base case is that if an algebraic function $f$ is just a polynomial, then return the trivial defining polynomial $z - f$. We are now ready to present the algorithm for finding a defining polynomial of an arbitrary radical expression, in Algorithm \ref{algo:finddefpoly}, and we prove its correctness below.

\begin{algorithm}
	\label{algo:finddefpoly}
	\SetAlgoLined
	\KwIn{Radical expression $f(x_1, \ldots, x_n)$.}
	\KwOut{Defining polynomial $p(z, x_1, \ldots, x_n)$ for $f$.}

	\BlankLine
	\SetKwFunction{RecursiveDefPoly}{RecursiveDefPoly}
	\SetKwProg{Fn}{Function}{:}{}

	\Fn{\RecursiveDefPoly{$f$}}
	{
		\uIf{$f$ is a polynomial}{
			\Return $z - f$\;
		}
		\uElseIf{$f = g + h$}{
			$p_g(z) \gets$ \RecursiveDefPoly{$g$}\;
			$p_h(z) \gets$ \RecursiveDefPoly{$h$}\;
			\Return $\res_t \left( p_g(z - t), p_h(t) \right)$\;
		}
		\uElseIf{$f = g - h$}{
			$p_g(z) \gets$ \RecursiveDefPoly{$g$}\;
			$p_h(z) \gets$ \RecursiveDefPoly{$h$}\;
			\Return $\res_t \left( p_g(z + t), p_h(t) \right)$\;
		}
		\uElseIf{$f = g \times h$}{
			$p_g(z) \gets$ \RecursiveDefPoly{$g$}\;
			$p_h(z) \gets$ \RecursiveDefPoly{$h$}\;
			$d \gets$ degree of $p_g(z)$\;
			\Return $\res_t \left( t^d \cdot p_g\left(\frac{z}{t}\right), p_h(t) \right)$\;
		}
		\uElseIf{$f = \frac{g}{h}$}{
			$p_g(z) \gets$ \RecursiveDefPoly{$g$}\;
			$p_h(z) \gets$ \RecursiveDefPoly{$h$}\;
			\Return $\res_t \left( p_g(t z), p_h(t) \right)$\;
		}
		\ElseIf{$f(x) = \sqrt[r]{g}, \quad r \in \mathbb{Z}, r>0$}{
			$p_g(z) \gets$ \RecursiveDefPoly{$g$}\;
			\Return $\res_t \left( z^r - t, p_g(t) \right)$\;
		}
	}

	\caption{Compute a Defining Polynomial of a Radical Expression}
\end{algorithm}

\begin{theorem}
	Algorithm \ref{algo:finddefpoly} computes a defining polynomial of any radical expression.
\end{theorem}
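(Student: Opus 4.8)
The plan is to prove the theorem by structural induction on the radical expression $f$, with Lemma~\ref{lem:zippel} doing the work in each recursive case. By definition a radical expression is built from $x_1,\dots,x_n$ and rational constants by finitely many applications of $+$, $-$, $\times$, $\div$, and extraction of an $r$-th root ($r\in\mathbb{Z}$, $r>0$); a general rational power $g^{p/q}$ is a composition of these (an integer power is repeated multiplication or division, followed by a $q$-th root), so every radical expression has a finite parse tree over these five operations. This makes the induction well-founded and, at the same time, shows that the recursion in Algorithm~\ref{algo:finddefpoly} terminates. For the base case, $f$ is a polynomial and the algorithm returns $z-f$; substituting $z=f$ gives $0$ identically, so $z-f$ is a defining polynomial of $f$, and it is nonzero (degree $1$ in $z$).

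For the inductive step I would split on the outermost operation of $f$, which, when $f$ is not a polynomial, is one of $g+h$, $g-h$, $g\times h$, $g/h$, or $\sqrt[r]{g}$; this list is exhaustive. Although a radical expression may be parsed in more than one way, the algorithm only needs to return \emph{some} valid defining polynomial, so fixing any one parse is harmless. Since $g$ and $h$ are subexpressions of $f$, hence themselves radical expressions, the induction hypothesis gives defining polynomials $p_g$ of $g$ and, where applicable, $p_h$ of $h$. The polynomial returned by the algorithm in that branch is then exactly the resultant listed for the corresponding operation in Lemma~\ref{lem:zippel}, so the lemma immediately yields that the output is a defining polynomial of $f$, closing the induction. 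Note that this is branch-insensitive: for instance $\res_t(z^r-t,\,p_g(t))$ equals $\pm p_g(z^r)$ and hence vanishes at any real $f$ with $f^r=g$, regardless of which root is intended — selecting the correct branch is the job of the later isolation step, not of this theorem.

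The one point that needs genuine care, and the only real content beyond the mechanical appeal to Lemma~\ref{lem:zippel}, is that the returned polynomial should be \emph{nonzero}: the zero polynomial vacuously satisfies the definition of a defining polynomial but conveys nothing. I would handle this by strengthening the induction hypothesis to ``the recursive call returns a nonzero defining polynomial'' and checking that each resultant preserves nonvanishing. The cleanest route is to argue over the field $\mathbb{Q}(x_1,\dots,x_n)$ and its algebraic closure, treating $z$ and the auxiliary variable $t$ as fresh indeterminates: a resultant $\res_t(A,B)$ with $A,B\in\mathbb{Q}(x)(z)[t]$ can vanish identically only through a nontrivial common factor of $A$ and $B$ or a leading-coefficient degeneracy. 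A common root $\tau$ of $A$ and $B$ is algebraic over $\mathbb{Q}(x)$ (being a root of $p_g$ or $p_h$, whose coefficients lie in $\mathbb{Q}(x)$), and the other equation then forces $z$ itself to be algebraic over $\mathbb{Q}(x)$ — e.g.\ for $f=g+h$ one gets $z=\tau+\beta$ with $\beta$ a root of $p_g$, so $z$ is a sum of two elements algebraic over $\mathbb{Q}(x)$ — which contradicts that $z$ is transcendental over $\mathbb{Q}(x)$. The $g/h$ and $\sqrt[r]{g}$ cases are easier still, since there $A$ is linear in $t$. Ruling out the leading-coefficient degeneracy reduces to a short inductive check on the leading and trailing coefficients of $p_g$ and $p_h$ (e.g.\ that the $t$-constant term of $t^d p_g(z/t)$ is nonzero, so $t=0$ is never a shared root in the product branch), together with a little extra attention to genuinely degenerate subexpressions such as the constant $0$. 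I expect this non-degeneracy bookkeeping — rather than the induction skeleton — to be the main obstacle.
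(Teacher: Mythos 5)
Your proof is correct and takes essentially the same route as the paper's, which is just the two-line observation that the recursion terminates because a radical expression involves finitely many operations and that each branch returns a defining polynomial by Lemma~\ref{lem:zippel}. Your additional concern that the returned resultant be a \emph{nonzero} polynomial is a legitimate refinement the paper does not address, but it is an add-on to the same induction skeleton rather than a different approach.
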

\begin{proof}
	Because every radical expression consists of a finite number of additions, subtractions, multiplications, divisions, and integer radicals, the algorithm terminates in a finite number of steps. The procedure \texttt{RecursiveDefPoly} correctly produces a defining polynomial due to Lemma \ref{lem:zippel}.
\end{proof}

There are some practical implementation considerations to discuss. The polynomial returned by \texttt{RecursiveDefPoly} may not be irreducible. To reduce the degree of the result, the polynomial should be replaced by its square-free part. Another way of attempting to lower the degree is to factor the polynomial and check whether any of the factors can be symbolically simplified to zero after replacing $z$ with $f(x_1, \ldots, x_n)$. Pattern matching for the cases in Algorithm \ref{algo:finddefpoly} has to be handled by the underlying computer algebra system. This is not hard to do in practice as radical expressions are typically stored as an expression tree, and so it suffices to keep checking for the operator at the root of this tree in order to determine the appropriate case.

Another consideration is the handling of rational exponents, e.g., $x^{2/3}$ as Lemma \ref{lem:zippel} only applies to positive integer radicals. But, of course, if $g^{r/s}$ for $r,s \in \mathbb{Z}, r,s>0$, then $g^r$ is algebraic, so we can treat it as $(g^r)^{1/s}$, and then use the result from Lemma \ref{lem:zippel}. Similarly, negative exponents can be treated as division by a positive exponent. Such transformations have to be handled by the computer algebra system but do not affect the pseudocode in Algorithm \ref{algo:finddefpoly}.

\begin{example}
	Some radical expressions (left) and their defining polynomials (right), as determined by Algorithm \ref{algo:finddefpoly}, are:
	\begin{equation*}
	\begin{array}{l l}
		x^{1/2} + x^{1/3} &\quad x^2 - x^3 - 6x^2 z + 3x^2 z^2 - 2x z^3 - 3x z^4 + z^6 \\
		x^{1/2} - x^{1/3} &\quad x^2 - x^3 + 6x^2 z + 3x^2 z^2 + 2x z^3 - 3x z^4 + z^6 \\
		\sqrt{1+x^2} &\quad -1 - x^2 + z^2 \\
		\sqrt{1+x^2} + x/y &\quad x^2 - y^2 - x^2 y^2 - 2 xyz + y^2 z^2
	\end{array}
	\end{equation*}
\end{example}

The complexity of algorithms for polynomial optimization crucially depends on the degrees of the polynomials involved, so we want to bound the degree of the defining polynomial. As the defining polynomial is constructed by iterated resultants, we first prove the following lemma about the degrees of resultants.

\begin{lemma}
	Suppose that we have two $n$-variate polynomials, $p(x_1, \dots, x_n), q(x_1, \dots, x_n)$. Let the degree of $p$ with respect to some $x \in \{x_i\}_1^n$ be denoted $d_{p,x}$, and similarly for $q$ we denote it $d_{q,x}$. Now choose some $x \in \{x_i\}_1^n$, and take the resultant with respect to it: $\res_x(p, q)$. Then, for any $x' \in \{x_i\}_1^n \setminus x$, we have that $d_{\res,x'} = d_{p,x'} d_{q,x} + d_{p,x} d_{q,x'}$.
\end{lemma}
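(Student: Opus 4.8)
The plan is to identify $\res_x(p,q)$ with $\det\syl_x(p,q)$ and to propagate the $x'$-degree through the Leibniz expansion of that determinant. Write $d=d_{p,x}$ and $e=d_{q,x}$, so that $\syl_x(p,q)$ is a $(d+e)\times(d+e)$ matrix whose first $e$ rows are shifted copies of the vector of $x$-coefficients $a_d,\dots,a_0$ of $p$, and whose last $d$ rows are shifted copies of the $x$-coefficients $b_e,\dots,b_0$ of $q$. Each $a_j$ and each $b_j$ is a polynomial in the remaining variables, and since the coefficients of $p$ viewed as a polynomial in $x$ cannot involve $x'$ to a higher power than $p$ itself does, we have $\deg_{x'} a_j\le d_{p,x'}$ for every $j$, and likewise $\deg_{x'} b_j\le d_{q,x'}$.

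First I would establish the inequality, which is the part actually needed for the downstream degree bounds on the defining polynomial. In $\det\syl_x(p,q)=\sum_{\sigma}\sign(\sigma)\prod_i\bigl(\syl_x(p,q)\bigr)_{i,\sigma(i)}$, every permutation $\sigma$ picks exactly one entry from each of the $d+e$ rows, hence exactly $e$ entries from the $p$-block and exactly $d$ entries from the $q$-block. Consequently every summand has $x'$-degree at most $e\,d_{p,x'}+d\,d_{q,x'}=d_{p,x'}d_{q,x}+d_{p,x}d_{q,x'}$, and therefore so does $\res_x(p,q)$. This gives the bound ``$\le$'' unconditionally.

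For the reverse inequality I would isolate the top $x'$-graded part. Write $p=\bar p\,x'^{\,d_{p,x'}}+(\text{terms of lower degree in }x')$ and $q=\bar q\,x'^{\,d_{q,x'}}+(\text{terms of lower degree in }x')$, with $\bar p,\bar q$ the $x'$-leading coefficients. Grading $\syl_x(p,q)$ by $x'$-degree, the coefficient of $x'^{\,d_{p,x'}d_{q,x}+d_{p,x}d_{q,x'}}$ in its determinant is obtained by replacing each entry by its coefficient of the maximal power of $x'$, which is precisely $\det$ of the Sylvester matrix of the leading forms $\bar p,\bar q$ taken in the formal degrees $d$ and $e$; equality in the lemma holds iff this determinant is nonzero. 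The main obstacle is exactly this non-cancellation step, and it genuinely fails for degenerate inputs: for $p=xx'$ and $q=xx'+1$ one has $\bar p=\bar q=x$, so the leading determinant is $0$ and indeed $\res_x(p,q)=x'$ has $x'$-degree $1<2$. Thus the stated equality should be read under the (generically satisfied) non-degeneracy hypothesis that $\bar p,\bar q$ retain full degree in $x$ and are coprime in $x$ — for instance whenever $p$ contains the corner monomial $x^{d_{p,x}}(x')^{d_{p,x'}}$, $q$ contains $x^{d_{q,x}}(x')^{d_{q,x'}}$, and the resulting $\bar p,\bar q$ have no common root in $x$; in full generality only ``$\le$'' holds, which is what the degree estimates require.
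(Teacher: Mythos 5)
Your proof takes the same route as the paper (expand $\det\syl_x(p,q)$ via the Leibniz formula and track the $x'$-degree of each permutation product), but it is more careful, and in being careful it exposes a real defect in the statement: the asserted \emph{equality} is false in general, and your counterexample is valid. For $p=xx'$ and $q=xx'+1$ one has $\syl_x(p,q)=\bigl(\begin{smallmatrix}x'&0\\x'&1\end{smallmatrix}\bigr)$, so $\res_x(p,q)=x'$ has $x'$-degree $1$, not $d_{p,x'}d_{q,x}+d_{p,x}d_{q,x'}=2$; even more simply, $\res_x(x+x',\,x+x'+1)=1$ has degree $0$. The paper's own proof claims there is a single permutation that simultaneously selects the maximal-$x'$-degree coefficient of $p$ from each of the $d_{q,x}$ copies of $p$'s coefficient row and the maximal-$x'$-degree coefficient of $q$ from each of the $d_{p,x}$ copies of $q$'s row; this overlooks two things that your write-up handles correctly. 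First, because the copies are shifted, selecting the same coefficient $a_j$ from all $e$ rows of the $p$-block occupies the column interval $\{d-j+1,\dots,d-j+e\}$, and the $q$-block can then only fill the complementary columns, which forces particular (generally not maximal-degree) coefficients of $q$; such a "doubly maximal" permutation exists only in special cases. Second, even when individual permutation terms attain the top degree, their signed sum can cancel, which is exactly what happens in your examples. Your reduction of the top $x'$-graded piece to the resultant of the $x'$-leading forms $\bar p,\bar q$ (in the formal degrees $d_{p,x},d_{q,x}$), with equality iff that resultant is nonzero, is the correct sharp statement.

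The good news is that, as you note, only the inequality $d_{\res,x'}\le d_{p,x'}d_{q,x}+d_{p,x}d_{q,x'}$ is used downstream: the subsequent theorem bounds the degrees of the constructed defining polynomial by "at most" the tabulated quantities, and your unconditional upper-bound argument (each permutation picks exactly $d_{q,x}$ entries from the $p$-block and $d_{p,x}$ from the $q$-block, each entry having $x'$-degree at most $d_{p,x'}$ resp.\ $d_{q,x'}$) fully supports that. The lemma should be restated with $\le$, or else with an explicit genericity hypothesis of the kind you formulate.
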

\begin{proof}
	\label{lem:resdegrees}
	Form the Sylvester matrix $\syl_x(p,q)$. The first $d_{q,x}$ rows are coefficients (polynomials in $\{x_i\}_1^n \setminus x$) of $p$, and the next $d_{p,x}$ rows are coefficients (polynomials in $\{x_i\}_1^n \setminus x$) of $q$. Note the structure of the Sylvester matrix is the entries are copied and shifted. Using the Leibniz formula for the determinant, we see that there exists a permutation such that we pick the coefficient from $p$ that achieves degree $d_{p,x'}$ in $x'$, repeated $d_{q,x}$ times (as the entries are just copied and shifted over), and the coefficient from $q$ that achieves degree $d_{q,x'}$ in $x'$, $d_{p,x}$ times. Hence, when we multiply across this permutation, we get a polynomial with degree $d_{p,x'} d_{q,x} + d_{p,x} d_{q,x'}$ in $x'$. Summing across all permutations, we see that this is the degree of the final polynomial.
\end{proof}

Now we can analyze how degrees accumulate during the recursive calls in Algorithm \ref{algo:finddefpoly}.

\begin{theorem}
	Suppose $f(x_1, \dots, x_n)$ and $g(x_1, \dots, x_n)$ are algebraic functions, with respective defining polynomials $p(z, x_1, \dots, x_n)$ and $q(z, x_1, \dots, x_n)$. We denote their degrees with respect to a given variable by $d_{p,\cdot}$ and $d_{q,\cdot}$, respectively, where the placeholder is for the variable. Let $r \in \mathbb{Z}, r>0$. Then, the defining polynomial of the first column, constructed as in Lemma \ref{lem:zippel}, has degree in a variable $x \in \{x_i\}_1^n$ at most the second column, and degree in variable $z$ at most the third column:
	$$
	\begin{array}{l l l}
		f + g &\qquad d_{p,x} d_{q,z} + d_{p,z} d_{q,x} &\qquad d_{p,z} d_{q,z} \\
		f - g &\qquad d_{p,x} d_{q,z} + d_{p,z} d_{q,x} &\qquad d_{p,z} d_{q,z} \\
		f \times g &\qquad d_{p,x} d_{q,z} + d_{p,z} d_{q,x} &\qquad d_{p,z} d_{q,z} \\
		\frac{f}{g} &\qquad d_{p,x} d_{q,z} + d_{p,z} d_{q,x} &\qquad d_{p,z} d_{q,z} \\
		\sqrt[r]{f} &\qquad d_{p,x} &\qquad r d_{p,z}
	\end{array}
	$$
\end{theorem}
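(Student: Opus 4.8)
The plan is to prove each of the five rows separately. In each case I write the defining polynomial produced by Lemma \ref{lem:zippel} as a single resultant $\res_t(P,Q)$ of two explicit polynomials $P(z,t,x_1,\dots,x_n)$ and $Q(t,x_1,\dots,x_n)$, read off the degrees of $P$ and $Q$ in the elimination variable $t$, in $z$, and in an arbitrary original variable $x$, and then invoke Lemma \ref{lem:resdegrees} with $t$ in the role of the eliminated variable and $z$ (or $x$) in the role of $x'$. That lemma gives $d_{\res,x'} = d_{P,x'}\,d_{Q,t} + d_{P,t}\,d_{Q,x'}$, so once the six relevant degrees are known the two claimed bounds reduce to arithmetic.

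For the four arithmetic rows the second polynomial is always $Q = q$ with its main variable renamed to $t$, so $d_{Q,t} = d_{q,z}$, $d_{Q,z} = 0$, and $d_{Q,x} = d_{q,x}$; the vanishing $d_{Q,z} = 0$ is what collapses the $z$-degree of the resultant to the single product $d_{p,z}d_{q,z}$. For the first polynomial I would argue: in the $f \pm g$ rows $P = p(z \mp t, x)$, and since $z \mp t$ is monic and linear in each of $z$ and $t$ while leaving the $x_i$ untouched, $d_{P,z} = d_{P,t} = d_{p,z}$ and $d_{P,x} = d_{p,x}$; in the $f \times g$ row $P = t^{d_{p,z}}p(z/t,x) = \sum_k a_k(x)\,z^k t^{d_{p,z}-k}$, so $d_{P,z} = d_{p,z}$, $d_{P,x} = d_{p,x}$, and $d_{P,t} \le d_{p,z}$; and in the $f/g$ row $P = p(tz,x) = \sum_k a_k(x)\,t^k z^k$, so $d_{P,z} = d_{P,t} = d_{p,z}$ and $d_{P,x} = d_{p,x}$. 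Substituting into the formula gives $d_{\res,z} = d_{p,z}d_{q,z}$ and $d_{\res,x} \le d_{p,x}d_{q,z} + d_{p,z}d_{q,x}$ in all four cases.

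For the radical row $P = z^r - t$ and $Q = p(t,x)$, so $d_{P,t} = 1$, $d_{P,z} = r$, $d_{P,x} = 0$, while $d_{Q,t} = d_{p,z}$, $d_{Q,z} = 0$, $d_{Q,x} = d_{p,x}$; Lemma \ref{lem:resdegrees} then gives $d_{\res,z} = r\,d_{p,z}$ and $d_{\res,x} = d_{p,x}$, the last row. The ``at most'' in the statement is needed partly for the product row, where $d_{P,t}$ can fall strictly below $d_{p,z}$ (precisely when $z \mid p$), and more generally to absorb the degree drops caused by the post-processing described after Algorithm \ref{algo:finddefpoly}, in which the raw resultant may be replaced by its square-free part or by a vanishing irreducible factor.

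The individual computations are routine once the substitutions are written out; there is no real obstacle beyond bookkeeping. The only subtlety worth flagging is that in the product and quotient rows the exponent of $t$ in a monomial of $P$ is bound to the exponent of $z$ in the corresponding monomial of $p$, so one must be careful about whether $d_{P,t}$ equals or merely bounds $d_{p,z}$; and I would also check that Lemma \ref{lem:resdegrees} is being applied with the true symbolic $t$-degrees of $P$ and $Q$ as the dimensions of the Sylvester matrix, so its degree formula applies verbatim.
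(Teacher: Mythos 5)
Your proof is correct and follows essentially the same route as the paper's: for each row, identify the two polynomials entering the resultant from Lemma \ref{lem:zippel}, read off their degrees in $t$, $z$, and $x$, and apply Lemma \ref{lem:resdegrees}. Your bookkeeping is in fact more careful than the paper's own proof, in particular in noting that $d_{P,t}$ may drop below $d_{p,z}$ in the product case and that the Sylvester matrix must be formed with the true $t$-degrees.
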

\begin{proof}
	We look at Lemma \ref{lem:zippel} for the appropriate resultants for these cases. Recall that the resultants are taken with respect to an auxiliary variable $t$. We apply Lemma \ref{lem:resdegrees} throughout.

	We begin with the statement for $x \in \{x_i\}_1^n$. For the cases $f+g$, $f-g$, $fg$, and $f/g$, the degree of the corresponding resultant is $d_{p,x} d_{q,t} + d_{p,t} d_{q,x}$. For these cases, the degrees with respect to $t$ are the same as the degrees with respect to $z$, so we have $d_{p,x} d_{q,z} + d_{p,z} d_{q,x}$. For $\sqrt[r]{f}$, the corresponding resultant is between: a polynomial with no $x$ terms in it and degrees $r$ and $1$ in $z$ and $t$, respectively, with the polynomial $p$. So, we simply have $d_{p,x}$.

	We now prove the statement for $z$. Similar to before, for the cases $f+g$, $f-g$, $fg$, and $f/g$, the degree of the corresponding resultant is $d_{p,z} d_{q,z}$, because in $q$, we substituted $t$ in for $z$. For $\sqrt[r]{f}$, based on our prior discussion, we have $r d_{p,z}$.
\end{proof}

\begin{remark}
	Observe that the degree of $z$ above only changes, during the running of Algorithm \ref{algo:finddefpoly}, in the $\sqrt[r]{f}$ case. In the base case of Algorithm \ref{algo:finddefpoly}, when we have a polynomial, the degree of $z$ of its minimal polynomial is $1$. If we only had the cases of $f+g$, $f-g$, $fg$, and $f/g$, then the degree of $z$ would always be $1$. It's only due to the $\sqrt[r]{f}$ case that its degree changes. In fact, the recursiveness allows us to see that an upper bound on the degree in $z$ of the minimal polynomial of an algebraic function is at most the product of all such positive integer radicals. This makes intuitive sense as exponents on the $z$ can be thought of as arising from exponentiation to eliminate radicals, e.g., as in $\sqrt{x}$ to $z^2-x$. Another example of this phenomenon is that the minimal polynomial of $x^{1/2} + x^{1/3}$ will have degree in $z$ equal to $6$, due to $2 \times 3$. Or, the minimal polynomial of $\sqrt{\sqrt{x} + 1}$ will have $z$ degree $4$, due to the two square roots.
\end{remark}

\begin{remark}
	The degree bounds above are tight in the case that the \texttt{RecursiveDefPoly} procedure in Algorithm \ref{algo:finddefpoly} actually returns the minimal polynomial. However, it is not easy to check whether the returned polynomial is minimal, as it requires factoring, and even with factoring, the factors themselves may not be defining polynomials of the radical expression, as shown in Example \ref{ex:noirreducibledefpoly}. In practice, we will nonetheless attempt to reduce the degree of the polynomial returned by \texttt{RecursiveDefPoly} by factoring and checking if the radical expression satisfies a factor.
\end{remark}

\section{Isolating an Algebraic Function}

Algorithm \ref{algo:finddefpoly} solves the first part of our plan, to find a defining polynomial. However, we now want to isolate the root representing our radical expression from the other roots of the same defining polynomial by means of polynomial inequalities. To do so, we will use a result from algebraic geometry called Thom's Lemma (see e.g. \citep{basu}), which allows us to distinguish roots of a polynomial by the signs of its derivatives.

\begin{lemma}[Thom's Lemma]
	\label{lemma:thom}
	Let $p(x)$ be a univariate polynomial of degree $d$. Then, for each $(\sigma_0, \ldots, \sigma_d) \in \{-1, 0, 1\}^{d+1}$, the set
		\[
		\left\{ x \in \mathbb{R} \,\middle|\, \sign(p(x))=\sigma_0 \land \sign(p'(x))=\sigma_1 \land \ldots \land \sign(p^{(d)}(x))=\sigma_d \right\}
		\]
		is either empty, a single point, or an open interval.
\end{lemma}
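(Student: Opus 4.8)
The plan is to prove the statement by induction on the degree $d$ of $p$. For the base case $d = 0$, the polynomial $p$ is a nonzero constant and the only relevant function is $p = p^{(0)}$ itself, so $\{x \in \mathbb{R} : \sign(p(x)) = \sigma_0\}$ is all of $\mathbb{R}$ if $\sigma_0 = \sign(p)$ and empty otherwise; since $\mathbb{R}$ counts as an open interval, this case is fine.

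For the inductive step I would assume the statement for polynomials of degree $d - 1$ and take $\deg p = d \geq 1$. The key move is to apply the induction hypothesis to $p'$: it has degree $d - 1$ and its own derivative tower is $p', p'', \ldots, p^{(d)}$, so with the sign conditions $(\sigma_1, \ldots, \sigma_d)$ the hypothesis says that
$$A = \left\{ x \in \mathbb{R} : \sign(p'(x)) = \sigma_1 \land \cdots \land \sign(p^{(d)}(x)) = \sigma_d \right\}$$
is empty, a single point, or an open interval. The target set is $S = A \cap \{x : \sign(p(x)) = \sigma_0\}$. If $A$ is empty then $S$ is empty, and if $A$ is a single point then $S$ is that point or empty; so the only real work is the case where $A$ is an open interval $I$.

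On $I$ the sign of $p'$ equals the constant $\sigma_1$. Here I would first observe that $\sigma_1 \neq 0$: since $\deg p \geq 1$, $p'$ is a nonzero polynomial with only finitely many zeros, so if $\sigma_1$ were $0$ then $A \subseteq \{x : p'(x) = 0\}$ would be finite and could not be an open interval. Hence $p'$ has constant nonzero sign on $I$, so $p$ is strictly monotone on $I$; assume it is strictly increasing, the decreasing case being symmetric. Then I would split on $\sigma_0$: the set $\{x \in I : p(x) = 0\}$ has at most one point because $p|_I$ is injective; the set $\{x \in I : p(x) > 0\}$ is open, being the preimage of an open set under the continuous map $p|_I$, and it is an up-set for the order on $I$ because $p$ is increasing, and an open up-set of an open interval is again an open interval (possibly empty, possibly all of $I$); symmetrically $\{x \in I : p(x) < 0\}$ is an open interval or empty. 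In every case $S$ is empty, a single point, or an open interval, which closes the induction.

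I do not expect a serious obstacle here --- the proof is a short induction --- but the points that need a little care are (i) excluding $\sigma_1 = 0$ when $A$ is a genuine interval, which rests on the fact that a nonzero polynomial cannot vanish on an interval, and (ii) the elementary observation that the superlevel and sublevel sets of a strictly monotone continuous function on an open interval are themselves open intervals. It is also worth keeping in mind throughout that ``open interval'' is understood to include unbounded ones such as half-lines and $\mathbb{R}$ itself, so the possible unboundedness of $I$ never has to be tracked separately.
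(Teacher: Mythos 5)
Your proof is correct. Note that the paper does not actually prove this lemma --- it quotes Thom's Lemma as a known result and points to the reference \citep{basu} --- so there is no in-paper argument to compare against. Your induction on $d$ is essentially the standard textbook proof found in that reference (where it is stated more generally for any finite family of polynomials closed under differentiation, proved by exactly this induction on the size of the family): apply the hypothesis to the derived tower $p', \ldots, p^{(d)}$ to get a set $A$ that is empty, a point, or an open interval, and in the interval case use the constant nonzero sign of $p'$ to conclude that $p$ is strictly monotone there, so that each of the three sign conditions on $p$ cuts out at most a point or an open subinterval. The two points you flag as needing care are exactly the right ones, and both are handled correctly: $\sigma_1 = 0$ is incompatible with $A$ being an interval since a nonzero polynomial has finitely many roots, and an open up-set (or down-set) of an open interval is again an open interval, including in the unbounded cases. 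The base case is also fine since a degree-$0$ polynomial is a nonzero constant. No gaps.
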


This can be seen with the example of $\sqrt{2}$: it has minimal polynomial $x^2-2$, and at $\sqrt{2}$ the derivative is positive, while at the other root, $-\sqrt{2}$, the derivative is negative. This in fact yields an alternative way to encode algebraic numbers, the so-called Thom encoding \citep{coste1988thom}. We extend this idea of Thom encodings to algebraic functions.

\begin{theorem} \label{thm:rootencoding}
	Let $p(z, x_1, \dots, x_n) = c_d(x_1, \dots, x_n) z^d + c_{d-1}(x_1, \dots, x_n) z^{d-1} + \dots + c_0(x_1, \dots, x_n)$ be a polynomial in $z$, with degree $d$, with coefficients that are polynomials in $x_1, \dots, x_n$. Let $p^{(i)}(z, x_1, \dots x_n)$ denote the $i$-th derivative of $p$ with respect to $z$.

	Define the set $A \subset \mathbb{R}^n$ by:
	\begin{equation*}
	A = \left\{ a \in \mathbb{R}^n \,\middle|\, \res_z \left( p, p^{(i)} \right)(a) \neq 0, \, i = 1, \dots, d \right\}
	\end{equation*}

	Then $A$ is an open subset of $\mathbb{R}^n$ such that for each connected component $C$ of $A$:
	\begin{enumerate}
		\item \label{item:roots} There exist continuous functions $r_1(x_1, \dots, x_n) < \dots < r_k(x_1, \dots, x_n)$ on $C$ such that, for each $a \in C$, the real roots of $p(z, a)$ in $z$ are $\{ r_1(a), \dots, r_k(a) \}$.
		\item \label{item:signs} For each $i = 1, \dots, d$ and $j = 1, \dots, k$, $\sign(p^{(i)}(r_j(x_1, \dots, x_k)))$ is constant and nonzero on $C$.
		\item \label{item:isolation} Let $\sigma_{i, j} := \sign(p^{(i)}(r_j(x_1, \dots, x_k)))$. Then $z = r_j(x_1, \dots, x_n)$ iff $p(z, x_1, \dots, x_n) = 0 \land \sign(p'(z, x_1, \dots, x_n))=\sigma_{1, j} \land \ldots \land \sign(p^{(d)}(z, x_1, \dots, x_n))=\sigma_{d, j}$.
\end{enumerate}
\end{theorem}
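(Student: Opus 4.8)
The plan is to establish the three claims in sequence, using the hypothesis that the resultants $\res_z(p, p^{(i)})$ are nonzero at every point of $A$ to guarantee that no root of $p$ (viewed as a polynomial in $z$) is ever shared with any of its $z$-derivatives on $A$. First I would record the basic consequence: for $a \in A$, the polynomial $p(z,a)$ has no common root with $p^{(i)}(z,a)$ for $i = 1, \dots, d$; in particular taking $i=1$ shows $p(z,a)$ is squarefree in $z$, so all its roots (real or complex) are simple, and moreover the leading coefficient $c_d(a) \neq 0$ (otherwise the degree in $z$ would drop and one can check the top resultant with $p^{(d)}$, a nonzero constant multiple of $c_d$, would vanish — this needs a small argument, or one can simply add the nonvanishing of $c_d$ to the definition of $A$ implicitly via $i=d$ since $p^{(d)} = d!\, c_d$). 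That $A$ is open is immediate: it is the complement of the zero sets of finitely many polynomials $\res_z(p,p^{(i)})$, hence a finite intersection of open sets.

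For claim \ref{item:roots}, I would fix a connected component $C$ of $A$ and invoke continuity of roots of a monic (after dividing by $c_d$, which is nonvanishing and continuous on $C$) polynomial whose coefficients depend continuously on $(x_1,\dots,x_n)$: since all roots stay simple throughout $C$, the implicit function theorem applied to $p(z,x)=0$ (with $\partial p/\partial z \neq 0$ at every root) gives that each root extends to a locally defined real-analytic function, and these local branches patch together on the connected set $C$ without collisions (two real branches cannot meet, as that would force a double root) and without a real branch disappearing into the complex plane (that too requires passing through a double root). Hence the number $k$ of real roots is constant on $C$, and they are given by continuous — indeed analytic — functions $r_1 < \dots < r_k$, the ordering being preserved by continuity and non-collision. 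This is the step I expect to be the main obstacle: making the "branches cannot collide or escape" argument fully rigorous requires care, essentially a monodromy/covering-space argument or an appeal to a standard theorem on continuity of roots under the simple-root hypothesis; I would cite the relevant statement from \citet{basu} rather than reprove it.

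For claim \ref{item:signs}, observe that for each fixed $i$ and $j$, the function $a \mapsto p^{(i)}(r_j(a), a)$ is continuous on $C$ (composition of continuous maps), and it never vanishes on $C$ because $r_j(a)$ is a root of $p(z,a)$ and $\res_z(p,p^{(i)})(a)\neq 0$ forbids $r_j(a)$ from also being a root of $p^{(i)}(z,a)$. A continuous nowhere-zero real function on a connected set has constant sign, so $\sigma_{i,j}$ is well-defined. Finally, claim \ref{item:isolation} follows by combining claim \ref{item:signs} with Thom's Lemma (Lemma \ref{lemma:thom}): for fixed $a \in C$, apply Thom's Lemma to the univariate polynomial $p(z,a)$ with the sign vector $(\sigma_0,\sigma_1,\dots,\sigma_d)$ where $\sigma_0 = 0$ and $\sigma_i = \sigma_{i,j}$ for $i \geq 1$. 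The Thom set for this vector is empty, a point, or an open interval; since it contains the root $r_j(a)$ of $p(z,a)$ and consists only of zeros of $p(z,a)$ (because $\sigma_0 = 0$) — of which there are finitely many, all isolated — it cannot be an open interval, so it is exactly the single point $\{r_j(a)\}$. Conversely any $z$ satisfying the sign conditions lies in this Thom set, hence equals $r_j(a)$. This gives the "iff" in claim \ref{item:isolation}, completing the proof. (I would note in passing that the $\sigma_{i,j}$ are the same for all $a \in C$ by claim \ref{item:signs}, so the isolating sign conditions are uniform over the whole component, which is what makes this useful for reformulating the algebraic program.)
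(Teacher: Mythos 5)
Your proposal is correct and follows essentially the same route as the paper: part (\ref{item:roots}) is reduced to a standard continuity-of-roots statement on a connected set where the leading coefficient and discriminant (equivalently, $\res_z(p,p')$ and $\res_z(p,p^{(d)})$) do not vanish — the paper cites Lemma 3.6 of \citet{strzebonski2000gencad} where you sketch the implicit-function-theorem argument — part (\ref{item:signs}) follows from the no-common-roots condition plus connectedness exactly as you say, and part (\ref{item:isolation}) is Thom's Lemma with $\sigma_0=0$. Your elaboration of why the Thom set cannot be an open interval (it consists of isolated zeros of $p(\cdot,a)$) is a detail the paper leaves implicit but is the intended argument.
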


\begin{proof}
	Each $C$ is a connected open set on which the discriminant and the leading coefficient of $p$ with respect to $z$ have constant nonzero signs, because $\res_z \left( p, p' \right) \neq 0$ and $\res_z \left( p, p^{(d)} \right) = (d!)^d c_d \neq 0$. Hence, (\ref{item:roots}) follows from Lemma 3.6 of \citep{strzebonski2000gencad}.
    The conditions that define $A$ guarantee that $p$ and its derivatives have no common roots in $A$, which proves (\ref{item:signs}).
    Finally, (\ref{item:isolation}) follows from Thom's Lemma.
\end{proof}

Since radical expressions define continuous functions, Theorem \ref{thm:rootencoding} shows that, on the set $A$, we can characterize the algebraic function given by a radical expression, uniquely, from the signs of the derivatives of its defining polynomial. The sign conditions will be polynomial inequalities, as the derivatives are polynomials. We can now state Algorithm \ref{algo:isolate} which generates a set of polynomial inequalities to isolate a given algebraic function from the other roots of its defining polynomial, provided we can effectively find the connected components of the set $A$ defined in the algorithm.

\begin{algorithm}
	\caption{Isolating an Algebraic Function From Other Roots of its Defining Polynomial}
	\label{algo:isolate}
	\KwIn{A radical expression $f(x_1, \dots x_n): \mathbb{R}^n \mapsto \mathbb{R}$ with defining polynomial $p(z, x_1, \dots, x_n): \mathbb{R}^{n+1} \mapsto \mathbb{R}$ of degree $d$.}
	\KwOut{A list of polynomial inequalities that uniquely define the algebraic function given by $f$ on the set $A = \left\{ x \in \mathbb{R}^n \,\middle|\, \res_z \left( p, p^{(i)} \right) \neq 0, \, i = 1, \dots, d \right\}$.}

	\BlankLine

	$L \gets$ empty list\;

	\ForEach{connected component $C$ of $A$}{
		Choose a point $a \in C$\;
		\uIf{all radicals in $f(a)$ are real-valued}{
			$r \gets f(a)$\;
		}
		\Else{
			\Continue
		}

		$\ell \gets$ conditions defining $C$\;
        Append $p = 0$ to $\ell$\;
		\For{$i = 1$ \KwTo $d$}{
			Compute the sign $s_i = \sign\left( p^{(i)}(r, a) \right)$\;
			\uIf{$s_i = +1$}{Append $p^{(i)} > 0$ to $\ell$\;}
			\Else{Append $p^{(i)} < 0$ to $\ell$\;}
		}
		Append $\ell$ to $L$
	}

	\KwRet{$L$}
\end{algorithm}

The key in Algorithm \ref{algo:isolate} is that over each connected component of $A$, we can determine the necessary sign conditions by evaluation at a single point.

In principle, polynomial inequalities defining connected components of $A$ can always be computed \citep{canny1988components, BPR1999components, strzebonski2017components}. However, generally computing connected components may be costly and the polynomial descriptions of components may be very complicated. Algorithm \ref{algo:isolate} remains correct if instead of connected components of $A$ one uses any decomposition of $A$ into connected sets. Hence, instead of computing connected components one can use cells of a cylindrical algebraic decomposition, or one can only combine cells that can be easily shown to be adjacent. In some instances, additional problem structure may be present that makes finding the components easy. One such case is when the radical expression depends only on one of the variables. Another is when the optimization constraints imply that the resultants in the definition of $A$ are nonzero and the set described by the constraints is obviously connected (e.g. the constraints are linear) -- in this case the set described by the constraints is contained in one connected component of $A$ and we are not interested in other components.

We now demonstrate Algorithm \ref{algo:isolate} with our old example of $\sqrt{x}$.

\begin{example}
	For $\sqrt{x}$, we have the defining (indeed, minimal) polynomial $p = z^2 - x$. The first derivative is $p^{(1)} = 2z$ and the second derivative is $p^{(2)} = 2$. We have $\res_z(p, p^{(1)}) = -4 x,$, so $A = \{x : x \neq 0\}$. The connected components of $A$ are $\{x : x > 0\}$ and $\{x : x < 0\}$.

	Over the component $\{x : x < 0\}$, choose any $x$, say $x = -1$. Plug it into $f$: $f(-1) = \sqrt{-1}$. This is not a real number, so no (real) algebraic functions exist here, which makes sense as the argument of a square root should be non-negative over the reals. So, we skip this component.

	Over the component $\{x : x > 0\}$, choose any $x$, say $x = 1$. Plug it into $f$: $f(1) = \sqrt{1} = 1$. This is a real number, so we have an algebraic function here. We let $r = f(1) = 1$. Evaluate the first derivative of $p$ at $r$ and $a$: $p^{(1)}(1, 1) = 2$. This is positive, so we have the inequality $2z > 0$, or $z > 0$. For the second derivative, we have $p^{(2)}(1, 1) = 2$, which is positive, so we have the inequality $2 > 0$, which is trivial so we do not need to include it.

	Therefore, the inequalities to isolate $\sqrt{x}$ over the component $\{x : x > 0\}$ are $z^2 - x = 0, z > 0$, which corroborates what we would have derived by hand.
\end{example}

We now show a harder example. Observe that it is possible for the same algebraic function to exist in multiple connected components. This is why in Algorithm \ref{algo:isolate}, we return a list of lists: each element of the list corresponds to a connected component containing the algebraic function, and within that is the list of inequalities defining the algebraic function in that component.

\begin{example}
	Consider the polynomial $x^2 - 2xy + y^2 - 2xz^2 - 2yz^2 + z^4$ which implicitly defines four algebraic functions: $\pm \sqrt{x} \pm \sqrt{y}$ (see Figure \ref{fig:fouralgfuncs}). We define $A = \{x \neq 0, y \neq 0, x-y \neq 0, x^2-7xy+y^2 \neq 0\}$. Suppose we want to isolate $\sqrt{x}+\sqrt{y}$. Running through Algorithm \ref{algo:isolate}, we obtain the following conditions to isolate $\sqrt{x}+\sqrt{y}$:
	\begin{align*}
		\{ x > 0, \; y > \frac{1}{2} (7 + 3 \sqrt{5}) x, \; z^3 > (x + y) z, \; x + y < 3 z^2, \; z > 0 \}, \\
		\{ x > 0, \; 0 < y < \frac{1}{2} (7 - 3 \sqrt{5}) x, \; z^3 > (x + y) z, \; x + y < 3 z^2, \; z > 0 \}, \\
		\{ x > 0, \; x < y < \frac{1}{2} (7 + 3 \sqrt{5}) x, \; z^3 > (x + y) z, \; x + y < 3 z^2, \; z > 0 \}, \\
		\{ x > 0, \; \frac{1}{2} (7 - 3 \sqrt{5}) x < y < x, \; z^3 > (x + y) z, \; x + y < 3 z^2, \; z > 0 \}
	\end{align*}

	Each list bracketed by curly braces corresponds to a connected component. In each, the first two conditions define the component and the latter three are the derivative sign conditions. We can see the function $\sqrt{x} + \sqrt{y}$ actually spans four connected components (see Figure \ref{fig:fourcomponents}). However, we can also note that the conditions on $y$ over the four components could be relaxed and combined so that we obtain a single component $x>0,y>0$, with the simpler representation $z^3 > (x+y)z, x+y<3z^2, z>0$ (if we are willing to accept where the resultants are zero).
\end{example}

\begin{figure}
	\includegraphics[width=0.6\textwidth]{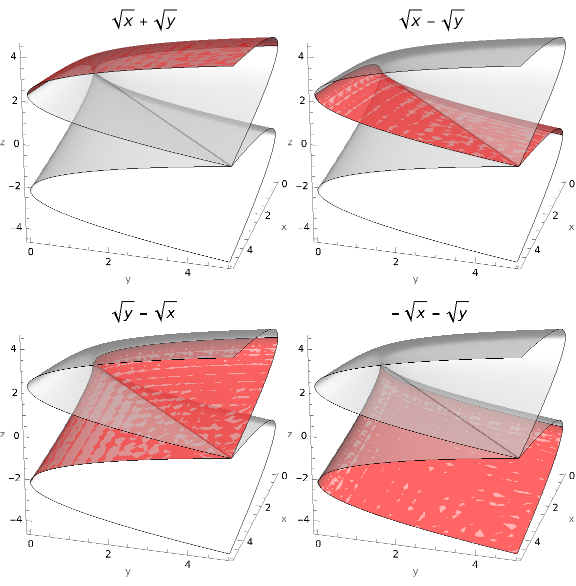}
	\caption{The polynomial $x^2 - 2xy + y^2 - 2xz^2 - 2yz^2 + z^4$ implicitly defines four algebraic functions as its roots: $\pm \sqrt{x} \pm \sqrt{y}$.}
	\label{fig:fouralgfuncs}
\end{figure}

\begin{figure}
	\includegraphics[width=0.6\textwidth]{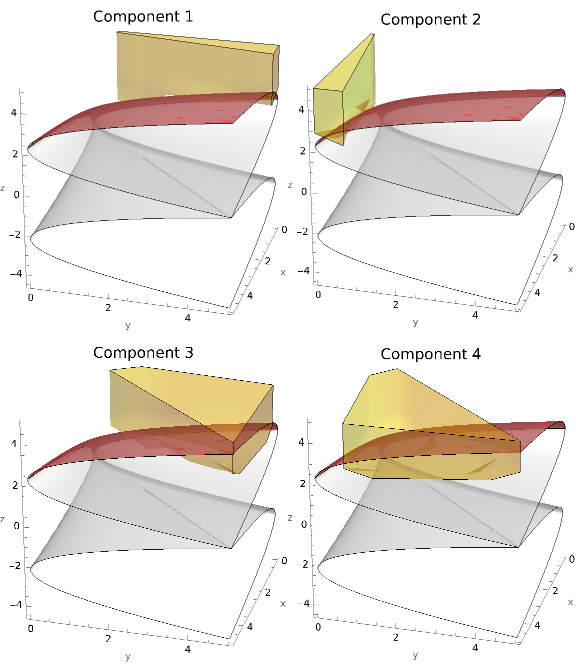}
	\caption{The algebraic function $\sqrt{x}+\sqrt{y}$ (red) spans four connected components on the zero locus of its minimal polynomial $x^2 - 2xy + y^2 - 2xz^2 - 2yz^2 + z^4$ (gray), each defined by polynomial inequalities (yellow).}
	\label{fig:fourcomponents}
\end{figure}

We now bound the complexity of this procedure by bounding the following quantities: the number of connected components over which an algebraic function can exist, the number of polynomial inequalities to isolate an algebraic function, and their degrees.

\begin{theorem}
	Suppose we have an algebraic function $f(x_1, \dots x_n)$, with a defining polynomial $p(z, x_1, \dots, x_n)$, which has degree in $z$ of $d$, and the total degree of each monomial in the resultants that define $A$ is bounded by $e$. Then, the algebraic function $f$ exists over at most $\binom{d}{n} \left( O(e) \right)^n$ connected components. As well, within a connected component, $f$ is uniquely defined by at most $d$ polynomial inequalities, which have degree in $z$ at most $d-1$.
\end{theorem}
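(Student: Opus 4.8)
\emph{Overview.} The plan is to treat the three quantities separately; only the count of connected components requires anything beyond a direct reading of Theorem \ref{thm:rootencoding} and Algorithm \ref{algo:isolate}.

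\emph{Number of components.} Since the components on which $f$ is real-valued are among \emph{all} connected components of $A$, it suffices to bound the number of connected components of $A$. By definition $A$ is the subset of $\mathbb{R}^n$ on which none of the $d$ polynomials $g_i := \res_z(p, p^{(i)})$, $i = 1, \dots, d$, vanishes, and each $g_i$ has total degree at most $e$ by hypothesis. Hence every connected component of $A$ is a union of full-dimensional cells of the sign-condition partition of $\mathbb{R}^n$ determined by the family $g_1, \dots, g_d$, so the number of components of $A$ is at most the number of realizable sign conditions of this family. I would then invoke the classical bound (Ole\u{\i}nik--Petrovski\u{\i} / Thom / Milnor, in the sharpened form for a family of polynomials found in \citep{basu}): $s$ polynomials of degree at most $e$ in $n$ variables realize at most $\binom{s}{n}(O(e))^n$ sign conditions. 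Applying this with $s = d$ gives at most $\binom{d}{n}(O(e))^n$ connected components of $A$, and a fortiori that many on which $f$ can exist.

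\emph{Number and degree of the isolating inequalities.} By Theorem \ref{thm:rootencoding}(\ref{item:isolation}), on a component $C$ with $f = r_j$ the function is cut out by $p(z, x_1, \dots, x_n) = 0$ together with the $d$ derivative sign conditions $\sign(p^{(i)}) = \sigma_{i,j}$, $i = 1, \dots, d$, which are exactly the inequalities appended to $\ell$ in Algorithm \ref{algo:isolate}. Writing $p = c_d z^d + \dots + c_0$, the derivative $p^{(i)}$ has degree $d - i$ in $z$, so each of these $d$ sign conditions is a polynomial inequality of degree at most $d-1$ in $z$; in particular $p^{(d)} = d!\,c_d$ is independent of $z$, and its sign is already forced by the conditions defining $C$. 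This yields the claimed bound of at most $d$ polynomial inequalities together with the degree bound in $z$.

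\emph{Main obstacle.} Everything here is bookkeeping except pinning down the combinatorial factor in the component count: a crude estimate, e.g. applying the Milnor--Thom bound to the single polynomial $\prod_i g_i$ of degree up to $de$, would give roughly $(O(de))^n$ and lose the $\binom{d}{n}$ structure. Obtaining exactly $\binom{d}{n}(O(e))^n$ requires the refined Ole\u{\i}nik--Petrovski\u{\i}-type count for a \emph{family} of polynomials rather than for one polynomial, so the substantive part of the write-up is citing and correctly instantiating that result.
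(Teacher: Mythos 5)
Your proposal is correct and takes essentially the same route as the paper: the component count comes from the Basu--Pollack--Roy-type bound applied to the family of $d$ resultants of degree at most $e$ in dimension $n$ (the paper phrases this as counting cells cut by the $d$ resultants on the $n$-dimensional variety $\{p=0\}$, which is the same instantiation), and the inequality count and degrees are read off the derivative sequence exactly as you do. One small correction: the number of connected components of $A$ is \emph{not} bounded by the number of realizable sign conditions, since the realization of a single strict sign condition can itself be disconnected (e.g.\ $x^2-1>0$); you need the refined result that bounds the total number of \emph{cells}, i.e.\ connected components of all sign-condition realizations, which is precisely what \citep{basu1996number} provides and what the paper cites.
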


\begin{proof}
	The set $A$ in Algorithm \ref{algo:isolate} is defined by $d$ resultants. Also, the algebraic function $f$ lies on an algebraic variety of dimension $n$, i.e., the zero locus of $p$. By \citet{basu1996number}, the number of cells is $\binom{d}{n} \left( O(e) \right)^n$.

	To define $f$ within a connected component, we use the derivative sequence, of which there are $d$ polynomials, which have degrees in $z$ of $d-1, \dots, 0$.
\end{proof}

\section{Reformulating an Algebraic Program}

We now have the necessary tools to reformulate an algebraic program into a polynomial one. Simply, for each radical expression, we run Algorithms \ref{algo:finddefpoly} and \ref{algo:isolate}. Let $A_{all}$ be the intersection of the sets $A$ that appear in all calls to Algorithm \ref{algo:isolate} and let $D$ be the set described by the optimization constraints. If $A_{all}\cap D$ is dense in $D$, the infimum of values of the objective function over $A_{all}\cap D$ and over $D$ are the same, hence it is sufficient to solve the optimization problem over $A_{all}\cap D$. If none of the resultants that appear in the definition of $A_{all}$ is identically zero, $A_{all}$ is open and dense in $\mathbb{R}^n$. If the problem is unconstrained, or all constraints are strict inequalities, or, more generally, if $D$ is contained in the closure of its interior, it follows that $A_{all}\cap D$ is dense in $D$. This are the cases in which we will apply our algorithm. In a more general setting, one would need to check whether $A_{all}\cap D$ is dense in $D$, which in principle can be done using the cylindrical algebraic decomposition algorithm.

We may obtain multiple child problems, due to the presence of different connected components. However, we argue that this is often better than the straightforward approach, which is to introduce a new variable for each distinct radical, as long as the number of distinct radical expressions we need to replace is less than the number of distinct radicals. For example, for $\sqrt{x} + \sqrt{y}, x>0, y>0$, replacing distinct radicals with new variables yields $z_1 + z_2, z_1^2 - x = 0, z_2^2 - y = 0, z_1 > 0, z_2 > 0, x>0, y>0$. Clearly, the number of variables can get quite large if the radical expression gets more complicated. This is important because current algorithms for polynomial programming all scale quite poorly with the number of variables. The complexity of cylindrical algebraic decomposition is doubly exponential in the number of variables \citep{basu}, but polynomial in the degree and number of polynomials. In fact, every algorithm listed on \citet[pages 4-6]{basu} has a complexity that is exponential in the number of variables but polynomial in the degree, including computing a point in each semialgebraic connected component as well as solving problems related to robotics motion planning. The complexity of sum-of-squares programming is similar: for the standard sum-of-squares formulation, the size of the matrices in our semidefinite program is proportional to the number of polynomials of total degree $d$ in $n$ variables, which grows like the binomial coefficient $\binom{n+d}{d}$ \citep{blekherman2012semidefinite}.

It is difficult to come up with a precise quantitative comparison between our method and the straightforward reformulation described above, as it is difficult to measure how complicated an algebraic function is (one measure may be how many recursive calls are made in Algorithm \ref{algo:finddefpoly}). Hence, below we show a few examples to show the utility of our approach.

\subsection{Implementation}

We implemented Algorithms \ref{algo:finddefpoly} and \ref{algo:isolate} in \emph{Mathematica}. Specifically, our implementation finds a defining polynomial, attempts to reduce its degree, and then isolates the root.

For the optimization problem, we use the sum-of-squares relaxation into a semidefinite program \citep{parrilo2003semidefinite, blekherman2012semidefinite} via \texttt{SumOfSquares.py} \citep{pythonsos} and PICOS \citep{picos}, with Mosek as the backend semidefinite solver. We will compare our performance with the straightforward reformulation, which is to introduce a new variable for each distinct radical.

It is important to note that various \emph{numerical optimization} methods are of course applicable to algebraic programs, e.g., gradient descent with random restarts. These can often be quite fast. However, here, we will use the sum-of-squares relaxation of polynomial programs \citep{parrilo2003semidefinite}, which provides formal guarantees on the optimum, which is important in certain science and engineering applications. In this paper, we do not compare our method to numerical optimization methods as our main goal is to extend methods for polynomial programs to algebraic programs.

\subsection{Example: Modified Goldstein-Price Function}

We analyze a modification of the Goldstein-Price function, a classic test case for global (polynomial) optimization \citep{testprob}. The Goldstein-Price function $f_{GP}(x,y)$ is defined as:

\begin{align*}
	f_{GP} = & \left[1 + (x + y + 1)^2 \left(19 - 14x + 3x^2 - 14y + 6xy + 3y^2\right)\right] \\
	& \times \left[30 + (2x - 3y)^2 \left(18 - 32x + 12x^2 + 48y - 36xy + 27y^2\right)\right]
\end{align*}

We translate the Goldstein-Price function down by $\sqrt{x-1} + \sqrt{y-1}$. In other words, we wish to minimize $f_{GP}(x,y) - \sqrt{x-1} - \sqrt{y-1}$, such that $x-1 \geq 0, y-1 \geq 0$.

The straightforward reformulation is: $\min f_{GP}(x,y) - u - v$ with the following constraints:
\begin{align*}
	&u^2 - x+1 = 0, v^2 - y+1 = 0 &\quad \text{(New vars.)} \\
	&x -1 \geq 0, y - 1 \geq 0, u \geq 0, v \geq 0 &\quad \text{(Domain)}
\end{align*}

The reformulation from our approach is: $\min f_{GP} - z$ with the following constraints:
\begin{align*}
	&x^2 - 2xy + y^2 + 4z^2 - 2xz^2 - 2yz^2 + z^4 = 0 &\quad \text{(Def. poly.)} \\
	&z^2 + 2 - x - y \geq 0, z \geq 0 &\quad \text{(Isolating root)} \\
	&x -1 \geq 0, y - 1 \geq 0 &\quad \text{(Domain)}
\end{align*}

We generated a single inequality to isolate the root as we merged the four connected components, hence deriving the constraints $z(2+z^2) \geq (x+y)z, 2+3z^2 \geq x+y, z \geq 0$, and note that the second constraint can be eliminated as we cancel the $z$ from the first constraint and observe that the first constraint includes the second one. Generating the minimal polynomial and the isolating inequalities took 0.135 seconds -- this can be considered the overhead of our formulation.

When constructing the semidefinite relaxation, we have to specify a degree bound to indicate the level of the sum-of-squares hierarchy to use \citep{parrilo2003semidefinite}. As the degree bound goes up, the relaxation is better but comes at a higher computational cost. We run the two formulations above with different degree bounds. The results are summarized in Table \ref{tab:main}.

\begin{table}[h!]
	\centering
	\tiny
	\begin{tabular}{llcccc}
		\toprule
		\textbf{Formulation} & \textbf{Degree} & \textbf{Number of variables} & \textbf{Total time (s)} & \textbf{Semidefinite solver time (s)} & \textbf{Lower bound} \\
		\midrule
		\multirow{3}{*}{Straightforward}
		& 4 & 6266 & 4.77 & 0.30 & 819.11 \\
		& 6 & 60166 & 93.13 & 4.90 & 830.53 \\
		& 8 & 359581 & 1663.12 & 67.70 & 985.33 \\
		\midrule
		\multirow{3}{*}{Ours}
		& 4 & 1842 & 0.99 & 0.06 & 819.11 \\
		& 6 & 11264 & 6.36 & 0.52 & 825.59 \\
		& 8 & 45911 & 39.12 & 2.65 & 985.32 \\
		\bottomrule
	\end{tabular}
	\caption{Performance metrics for different formulations of our modified Goldstein-Price function optimization and different degrees of sum-of-squares hierarchy}
	\label{tab:main}
\end{table}

The differences in the formulations is clear: ours has between 6-8x fewer variables in the semidefinite relaxation, and achieves up to a 40x speedup in total time. The optimal value returned by the semidefinite solver is a lower bound on the objective value of our program, and there are no discernible differences between our formulation and the straightforward one there, showing that the relaxation from our formulation is not weaker.

\subsection{Example: Modified Rosenbrock Function}

Consider the Rosenbrock function: $f_R(x,y) = (1-x)^2 + 100(y-x^2)^2$, which is a classic global optimization test case characterized by a long, narrow valley \citep{testprob}. We add to it a nested radical expression, so our problem is to minimize $f_R(x,y) + \sqrt{x^2 + \sqrt{y^2+1}}$ over $\mathbb{R}^2$.

The straightforward reformulation is $\min f_R(x,y) + u$ with the following constraints:
\begin{align*}
	&u^2 = x^2 + v, v^2 = y^2 + 1 &\quad \text{(New vars.)} \\
	&u \geq 0, v \geq 0 &\quad \text{(Domain)}
\end{align*}

Note that we need not restrict the domain of the argument in the radical as it is always non-negative.

The reformulation from our approach is $\min f_R + z$ with the following constraints:
\begin{align*}
	&z^4 - 2x^2z^2 - y^2 + x^4 - 1 = 0 &\quad \text{(Def. poly.)} \\
	&z^2 \geq x^2, z \geq 0 &\quad \text{(Isolating root)}
\end{align*}

Generating the defining polynomial and the isolating inequalities (to note that they are not needed) took 0.52 seconds. As before, we now construct the semidefinite relaxations for different degrees of the sum-of-squares hierarchy and compare the results.

\begin{table}[h!]
	\centering
	\tiny
	\begin{tabular}{llcccc}
		\toprule
		\textbf{Formulation} & \textbf{Degree} & \textbf{Number of variables} & \textbf{Total time (s)} & \textbf{Semidefinite solver time (s)} & \textbf{Lower bound} \\
		\midrule
		\multirow{3}{*}{Straightforward}
		& 4 & 4586 & 2.99 & 0.19 & 1.35029 \\
		& 6 & 42162 & 58.20 & 2.73 & 1.35029 \\
		& 8 & 244231 & 1111.23 & 34.29 & 1.35031 \\
		\midrule
		\multirow{3}{*}{Ours}
		& 4 & 1254 & 0.72 & 0.05 & 1.35029 \\
		& 6 & 7500 & 3.41 & 0.27 & 1.35029 \\
		& 8 & 30031 & 21.86 & 1.23 & 1.35029 \\
		\bottomrule
	\end{tabular}
	\caption{Performance metrics for different formulations of our modified Rosenbrock function optimization and different degrees of sum-of-squares hierarchy}
	\label{tab:rosenbrock}
\end{table}

Again, we see that our formulation solves the optimization problem much faster than the straightforward one. Even at the lowest degree of the sum-of-squares hierarchy, degree $4$, our formulation solves the problem around 4x faster than the straightforwrad one, and this difference grows to around 50x at the highest degree.

\section{Discussion and Conclusion}

In this paper, we presented a strict generalization of polynomial programming termed algebraic programming. We presented a systematic way to formulate algebraic programs into polynomial programs, by means of a minimal polynomial and root isolation inequalities. The latter algorithm also provides an algorithmic approach to the problem of branch cut generation in complex analysis. On two toy examples with a modified Goldstein-Price function and a modified Rosenbrock function, we demonstrated the utility of our formulation over the straightforward formulation.

There are a few barriers to scalability of our algorithm. In Algorithm \ref{algo:isolate}, the degree of the minimal polynomial can quickly blow up depending on how complex the algebraic function is (though we provided bounds on these degrees), as can the number of monomials. The complexity of finding connected components of the set $A$ in Algorithm \ref{algo:isolate} can also be high as the degree of the minimal polynomial goes up, and it requires the use of cylindrical algebraic decomposition to characterize the connected components. As well, with multiple algebraic functions in the same program, we may get a combinatorial explosion in the number of connected components that we need to search over. In fact, the complexity of Algorithms \ref{algo:finddefpoly} and \ref{algo:isolate} might be prohibitive for some algebraic programs, and in those cases, it is likely easier to just use the straightforward reformulation. However, these complexities match typical complexities seen in algorithms for polynomial systems. Our method may not be applicable in some algebraic programs (when $A_{all}\cap D$ is not dense in $D$), or it may be computationally prohibitive for some instances, and hence cannot be considered a practical panacea for all algebraic programs. As well, for certain classes of algebraic programs, specialized algorithms are likely to be more efficient, e.g., rational functions and posynomials (i.e., geometric programming).

However, when the overhead of Algorithms \ref{algo:finddefpoly} and \ref{algo:isolate} is low enough, as seen on our toy examples of the modified Goldstein-Price function and the modified Rosenbrock function, we gain a considerable speedup in solution time compared to the straightforward reformulation -- in fact, up to a 50x speedup in our experiments. As well, our theoretical contribution of uniquely identifying an algebraic function by a defining polynomial and polynomial inequalities is a new result, which may be of independent interest. In conclusion, we hope that this work will spur further research into algebraic programming and its applications.

\newpage

\bibliographystyle{plainnat}
\bibliography{ref.bib}

\begin{thebibliography}{37}
\providecommand{\natexlab}[1]{#1}
\providecommand{\url}[1]{\texttt{#1}}
\expandafter\ifx\csname urlstyle\endcsname\relax
  \providecommand{\doi}[1]{doi: #1}\else
  \providecommand{\doi}{doi: \begingroup \urlstyle{rm}\Url}\fi

\bibitem[Abel(1881{\natexlab{a}})]{abel1881_1824}
Niels~Henrik Abel.
\newblock Mémoire sur les équations algébriques, ou l'on démontre
  l'impossibilité de la résolution de l'équation générale du cinquième
  degré.
\newblock In Ludwig Sylow and Sophus Lie, editors, \emph{Œuvres Complètes de
  Niels Henrik Abel}, volume~I, pages 28--33. Grøndahl \& Søn, 2nd edition,
  1881{\natexlab{a}}.
\newblock URL \url{https://archive.org/details/oeuvrescompltes01abelgoog}.
\newblock [First published 1824].

\bibitem[Abel(1881{\natexlab{b}})]{abel1881_1826}
Niels~Henrik Abel.
\newblock Démonstration de l'impossibilité de la résolution algébrique des
  équations générales qui passent le quatrième degré.
\newblock In Ludwig Sylow and Sophus Lie, editors, \emph{Œuvres Complètes de
  Niels Henrik Abel}, volume~I, pages 66--87. Grøndahl \& Søn, 2nd edition,
  1881{\natexlab{b}}.
\newblock URL \url{https://archive.org/details/oeuvrescompltes01abelgoog}.
\newblock [First published 1826].

\bibitem[Akritas and Strzebo{\'n}ski(2005)]{akritas2005comparative}
Alkiviadis~G. Akritas and Adam~W. Strzebo{\'n}ski.
\newblock A comparative study of two real root isolation methods.
\newblock \emph{Nonlinear Analysis: Modelling and Control}, 10\penalty0
  (4):\penalty0 297--304, 2005.
\newblock \doi{10.15388/NA.2005.10.4.15110}.
\newblock URL \url{https://doi.org/10.15388/NA.2005.10.4.15110}.

\bibitem[Artin(1927)]{artin1927}
Emil Artin.
\newblock {\"U}ber die zerlegung definiter funktionen in quadrate.
\newblock \emph{Abhandlungen aus dem Mathematischen Seminar der Universit{\"a}t
  Hamburg}, 5\penalty0 (1):\penalty0 100--115, 1927.
\newblock \doi{10.1007/BF02952513}.

\bibitem[Basu et~al.(1999)Basu, Pollack, and Roy]{BPR1999components}
S.~Basu, R.~Pollack, and M.F. Roy.
\newblock Computing roadmaps of semi-algebraic sets on a variety.
\newblock \emph{Journal of the AMS}, 3:\penalty0 55--82, 1999.

\bibitem[Basu et~al.(1996)Basu, Pollak, and Roy]{basu1996number}
Saugata Basu, Richard Pollak, and Marie-Fran{\c{c}}oise Roy.
\newblock On the number of cells defined by a family of polynomials on a
  variety.
\newblock \emph{Mathematika}, 43\penalty0 (1):\penalty0 120--126, 1996.

\bibitem[Basu et~al.(2006)Basu, Pollack, and Roy]{basu}
Saugata Basu, Richard Pollack, and Marie-Françoise Roy.
\newblock \emph{Algorithms in Real Algebraic Geometry}.
\newblock Springer Berlin, Heidelberg, 2006.

\bibitem[Blekherman et~al.(2012)Blekherman, Parrilo, and
  Thomas]{blekherman2012semidefinite}
Grigoriy Blekherman, Pablo~A Parrilo, and Rekha~R Thomas.
\newblock \emph{Semidefinite optimization and convex algebraic geometry}.
\newblock SIAM, 2012.

\bibitem[Boyd et~al.(2007)Boyd, Kim, Vandenberghe, and
  Hassibi]{boyd2007tutorial}
Stephen Boyd, Seung-Jean Kim, Lieven Vandenberghe, and Arash Hassibi.
\newblock A tutorial on geometric programming.
\newblock \emph{Optimization and engineering}, 8:\penalty0 67--127, 2007.

\bibitem[Canny(1988)]{canny1988components}
John~F. Canny.
\newblock \emph{The Complexity of Robot Motion Planning}.
\newblock MIT Press, Cambridge, MA, USA, 1988.
\newblock ISBN 0-262-03136-1.

\bibitem[Collins(1974)]{collins1974quantifier}
George~E Collins.
\newblock Quantifier elimination for real closed fields by cylindrical
  algebraic decomposition--preliminary report.
\newblock \emph{ACM SIGSAM Bulletin}, 8\penalty0 (3):\penalty0 80--90, 1974.

\bibitem[Coste and Roy(1988)]{coste1988thom}
Michel Coste and Marie-Fran{\c{c}}oise Roy.
\newblock Thom's lemma, the coding of real algebraic numbers and the
  computation of the topology of semi-algebraic sets.
\newblock \emph{Journal of Symbolic Computation}, 5\penalty0 (1-2):\penalty0
  121--129, 1988.

\bibitem[De~Moura and Bj{\o}rner(2008)]{z3paper}
Leonardo De~Moura and Nikolaj Bj{\o}rner.
\newblock Z3: An efficient smt solver.
\newblock In \emph{International conference on Tools and Algorithms for the
  Construction and Analysis of Systems}, pages 337--340. Springer, 2008.

\bibitem[Hilbert(1893)]{hilbert1893}
David Hilbert.
\newblock Ueber die vollen invariantensysteme.
\newblock \emph{Mathematische Annalen}, 42\penalty0 (3):\penalty0 313--373,
  1893.
\newblock \doi{10.1007/BF01444162}.

\bibitem[Jibetean and de~Klerk(2006)]{jibetean2006global}
Dorina Jibetean and Etienne de~Klerk.
\newblock Global optimization of rational functions: a semidefinite programming
  approach.
\newblock \emph{Mathematical Programming}, 106:\penalty0 93--109, 2006.

\bibitem[Jovanovi{\'c} and De~Moura(2013)]{z3cad}
Dejan Jovanovi{\'c} and Leonardo De~Moura.
\newblock Solving non-linear arithmetic.
\newblock \emph{ACM Communications in Computer Algebra}, 46\penalty0
  (3/4):\penalty0 104--105, 2013.

\bibitem[Knopp(1996)]{knopp1996}
Konrad Knopp.
\newblock \emph{Algebraic Functions}, pages 119--134.
\newblock Dover, New York, 1996.

\bibitem[Krivine(1964)]{krivine1964}
J.~L. Krivine.
\newblock Anneaux pr\'{e}ordonn\'{e}s.
\newblock \emph{Journal d'Analyse Math\'{e}matique}, 12:\penalty0 307--326,
  1964.
\newblock \doi{10.1007/bf02807438}.

\bibitem[Laraki and Lasserre(2012)]{laraki2012semidefinite}
Rida Laraki and Jean~B Lasserre.
\newblock Semidefinite programming for min--max problems and games.
\newblock \emph{Mathematical programming}, 131:\penalty0 305--332, 2012.

\bibitem[Lasserre(2001)]{lasserre2001global}
Jean~B Lasserre.
\newblock Global optimization with polynomials and the problem of moments.
\newblock \emph{SIAM Journal on optimization}, 11\penalty0 (3):\penalty0
  796--817, 2001.

\bibitem[Parrilo(2003)]{parrilo2003semidefinite}
Pablo~A Parrilo.
\newblock Semidefinite programming relaxations for semialgebraic problems.
\newblock \emph{Mathematical programming}, 96:\penalty0 293--320, 2003.

\bibitem[Putinar(1993)]{putinar1993}
Mihai Putinar.
\newblock Positive polynomials on compact semi-algebraic sets.
\newblock \emph{Indiana University Mathematics Journal}, 42\penalty0
  (3):\penalty0 969--984, 1993.
\newblock \doi{10.1512/iumj.1993.42.42045}.

\bibitem[Ruffini(1813)]{ruffini1813}
Paolo Ruffini.
\newblock \emph{Riflessioni intorno alla soluzione delle equazioni algebraiche
  generali opuscolo del cav. dott. Paolo Ruffini}.
\newblock Societa Tipografica, 1813.

\bibitem[Sagnol and Stahlberg(2022)]{picos}
Guillaume Sagnol and Maximilian Stahlberg.
\newblock {PICOS}: A {Python} interface to conic optimization solvers.
\newblock \emph{Journal of Open Source Software}, 7\penalty0 (70):\penalty0
  3915, February 2022.
\newblock ISSN 2475-9066.
\newblock \doi{10.21105/joss.03915}.

\bibitem[Schm{\"u}dgen(1991)]{schmudgen1991}
Konrad Schm{\"u}dgen.
\newblock The k-moment problem for compact semi-algebraic sets.
\newblock \emph{Mathematische Annalen}, 289\penalty0 (1):\penalty0 203--206,
  1991.
\newblock ISSN 0025-5831.
\newblock \doi{10.1007/bf01446568}.

\bibitem[Stengle(1974)]{stengle1974nullstellensatz}
Gilbert Stengle.
\newblock A nullstellensatz and a positivstellensatz in semialgebraic geometry.
\newblock \emph{Mathematische Annalen}, 207:\penalty0 87--97, 1974.

\bibitem[Strzebo{\'n}ski(1999)]{strzebonski1999optim}
Adam~W. Strzebo{\'n}ski.
\newblock Global optimization of real algebraic functions subject to algebraic
  equation and inequality constraints.
\newblock \url{https://www.researchgate.net/publication/270904844}, 1999.

\bibitem[Strzebo{\'n}ski(2000)]{strzebonski2000gencad}
Adam~W. Strzebo{\'n}ski.
\newblock Solving systems of strict polynomial inequalities.
\newblock \emph{Journal of Symbolic Computation}, 29:\penalty0 471--480, 2000.

\bibitem[Strzebo{\'n}ski(2017)]{strzebonski2017components}
Adam~W. Strzebo{\'n}ski.
\newblock Cad adjacency computation using validated numerics.
\newblock In \emph{Proceedings of the International Symposium on Symbolic and
  Algebraic Computation, ISSAC 2017}, pages 413--420. ACM, 2017.

\bibitem[Surjanovic and Bingham(2013)]{testprob}
S.~Surjanovic and D.~Bingham.
\newblock Virtual library of simulation experiments: Test functions and
  datasets.
\newblock Retrieved November 8, 2024, from \url{http://www.sfu.ca/~ssurjano},
  2013.

\bibitem[Tarski(1951)]{tarski1951decision}
Alfred Tarski.
\newblock A decision method for elementary algebra and geometry.
\newblock In \emph{Quantifier elimination and cylindrical algebraic
  decomposition}, pages 24--84. Springer, 1951.

\bibitem[van~der Waerden(1931)]{vanderwaerden1931}
B.~L. van~der Waerden.
\newblock \emph{Modern Algebra, Volume II}.
\newblock Springer, 1931.

\bibitem[Vigneron(2014)]{vigneron2014geometric}
Antoine Vigneron.
\newblock Geometric optimization and sums of algebraic functions.
\newblock \emph{ACM Transactions on Algorithms (TALG)}, 10\penalty0
  (1):\penalty0 1--20, 2014.

\bibitem[Yuan(2024)]{pythonsos}
Chenyang Yuan.
\newblock {SumOfSquares.py}, 2024.
\newblock URL \url{https://github.com/yuanchenyang/SumOfSquares.py}.

\bibitem[Zener et~al.(1967)Zener, Duffin, and Peterson]{zener1967geometric}
CE~Zener, RJ~Duffin, and EL~Peterson.
\newblock \emph{Geometric programming}.
\newblock John Wiley, \& Sons, Inc., New York, 1967.

\bibitem[Zippel(1993)]{zippel1993book}
Richard Zippel.
\newblock \emph{Effective Polynomial Computation}.
\newblock Kluwer Academic Press, Boston, MA, 1993.

\bibitem[Zippel(1997)]{zippel1997zero}
Richard Zippel.
\newblock Zero testing of algebraic functions.
\newblock \emph{Information processing letters}, 61\penalty0 (2):\penalty0
  63--67, 1997.

\end{thebibliography}

\end{document}